\documentclass[preprint,12pt,superscriptaddress]{elsarticle}

\usepackage{amsmath,amssymb,amsthm,mathtools}
 \usepackage{enumitem} 
 \usepackage{microtype} 
 \usepackage[hidelinks]{hyperref}

  \emergencystretch=2em \hypersetup{ pdftitle={Square matrices with integer eigenvalues for all permutations of integer entries}, pdfauthor={Shuming Cheng, Siran Zhang}, pdfkeywords={integer matrix, integral spectrum, entry permutations, $2$-adic valuation} }

\makeatletter \apptocmd{\ps@pprintTitle}{\g@addto@macro\@oddfoot{\unskip}}{}{} \makeatother

\newtheorem{theorem}{Theorem}[section]
\newtheorem{proposition}[theorem]{Proposition} 
\newtheorem{lemma}[theorem]{Lemma}
 \newtheorem{corollary}[theorem]{Corollary}
  \theoremstyle{definition} 
   \newtheorem{definition}[theorem]{Definition}
    \newtheorem{conjecture}[theorem]{Conjecture}
     \theoremstyle{remark} 
     
      \theoremstyle{problem} 
     \newtheorem{problem}[theorem]{Problem}

\newcommand{\Q}{\mathbb Q} 
 
\newcommand{\Z}{\mathbb Z}
 \newcommand{\F}{\mathbb F}

  \newcommand{\PSI}{\operatorname{PSI}}
  \newcommand{\one}{\mathbf 1}

\begin{document}

\begin{frontmatter}

\title{Square matrices with integer eigenvalues under entry permutations}
	\author[1,2]{Shuming Cheng\corref{cor1}}
	\ead{drshuming.cheng@gmail.com}
	\author[1,3]{Siran Zhang}
	
	\cortext[cor1]{Corresponding author.}
	
		\affiliation[1]{organization={State Key Laboratory of Autonomous Intelligent Unmanned Systems, Shanghai Research Institute for Intelligent Autonomous Systems, Tongji University},
		city={Shanghai}, postcode={201203}, country={China}}
		\affiliation[2]{organization={Department of Control Science and Engineering, Tongji University},
		city={Shanghai}, postcode={201804}, country={China}}
	\affiliation[3]{organization={Research \& Development Center BMW, BMW China Services Ltd.},
		city={Beijing}, postcode={101318}, country={China}}

\begin{abstract} 
	
We investigate the problem of which multisets of integer entries yield integer eigenvalues in every square matrix arrangement? Combining module reduction with a dyadic criterion for complete splitting of cubic polynomials, we first show that if the multiset has at least one zero entry, the only possibilities have either at most one nonzero entry or exactly two whose product is a perfect square. We then apply block embedding to extend it to higher dimensions, by obtaining a linear threshold on the number of zeros. For any multiset close to a nonzero constant, we use rank reduction to derive an exact criterion for one exceptional entry, thus yielding nonconstant examples in infinitely many dimensions, and to also derive an exact reduction for two distinct exceptional entries in dimension three. The general fully nonzero three-dimensional case remains open.

 \end{abstract}

\begin{keyword} integer matrix\sep integral spectrum \sep entry permutations \sep $2$-adic valuation \MSC[2020] 15A18 \sep 11D25 \sep 15B36 \end{keyword}

\end{frontmatter}

\section{Introduction}\label{sec:introduction}

Whether an integer matrix has integral spectrum, i.e., all its eigenvalues are integers, has been studied from several points of view. Such possibilities have been explored for symmetric matrices~\cite{Estes1992,CaoKoyuncu2016}, graph-related families~\cite{BarikBehera2024}, invariants-based constructions~\cite{Rushanan1995}, and general ones~\cite{Renaud1983,Cronin1987,TowseCampbell2016,KenyonEtAl2024}, while a complementary perspective is provided that almost all integer matrices have no integer eigenvalues~\cite{MartinWong2009} and matrices in dimension two can be prescribed with integer eigenvalues~\cite{MartinWong2008}. In most of these works, the spectral property depends on relations among particular matrix positions, however, an arbitrary permutation of matrix entries generally destroys such relations so that an integer matrix can lose its integer eigenvalues under entry permutation.

In this work, we study the problem of whether an integer square matrix and ones under all possibly entry permutations can simultaneously have integral spectrum. Requiring integral spectrum for every entry permutation of a matrix is essentially imposing a condition on the multiset formed by matrix entries, and correspondingly, our problem can be interpreted as what the structure of a matrix can survive this freedom to move each entry independently? 

Hall's work gives an intriguing starting point~\cite{Hall2025}. In dimension two, the Pythagorean triples are utilised to generate nonconstant multisets for which every square matrix arrangement has integer eigenvalues, and the four entries $1, 6, 7, 12,$ provide a simple example. Hall also derives a necessary and sufficient criterion involving six quadratic discriminants and asks whether other solutions exist in dimension three, beyond the constant multisets. The present work aims to answer this question.

First, we define a multiset of $n\times n$ integers as permutation-spectrally integral ($\PSI$) in dimension $n$, if every square matrix whose entries are exactly elements of this multiset has integral spectrum. Thus, our problem can be precisely reformulated as whether a nonconstant $\PSI$ multiset exists in $n\geq 3$. And if possible, what structures the $\PSI$ multiset should satisfy. 

Then, we combine module reduction with a dyadic criterion for complete splitting of cubic polynomials to show that the multiset density, defined as the number of nonzero entries in a multiset, plays a critical role in determining whether any given multiset is $\PSI$. In particular, if the multiset in dimension three has at least one zero, then it is $\PSI$ if and only if it has either at most one nonzero entry or exactly two whose product is an integer square. For the fully nonzero case, constant multisets are conjectured to be the only $\PSI$ examples.

We further apply block embedding to extend the above result to higher dimensions. When the nonzero entry proportion of a multiset in dimension $n\geq 3$ is less than $1-3/n$,  it is $\PSI$ if and only if it has at most two nonzero entries whose product is an integer square. Finally, we use rank reduction for the multiset close to a nonzero constant to show that the dense case behaves differently, that a $\PSI$ multiset with one exceptional entry exists in infinitely many dimensions, and a reduction criterion for a $\PSI$ multiset with two distinct exceptional entries in dimension three are also obtained.

The rest of this work is organized as follows. Section~\ref{sec:prelim} introduces some basic notations, recalls Hall's discriminant criterion in dimension two, and generalises some results to higher dimensions.  Sections~\ref{sec:three} presents the main result about the $\PSI$ multiset in dimension three, and Section~\ref{sec:embedding} extends it to the multiset in any dimension beyond three. Then, Section~\ref{sec:defects} presents nontrivial $\PSI$ constructions in higher dimensions, and finally, Section~\ref{sec:summary} concludes with a brief summary.

\section{Preliminaries}\label{sec:prelim}

Consider an $n\times n$ integer matrix and its $n^2$ entries form an integer multiset $\mathcal{S}$, and denote by $\mathcal M_n(\mathcal{S})$ the set of all $n\times n$ integer matrices whose entries are exactly the elements of $\mathcal{S}$. In $\mathcal M_n(S)$, any two distinct matrices can be transformed into each other under a series of entry permutations. A matrix has integral spectrum, if all its eigenvalues are integers. In this work, we study the problem of whether all matrices in $\mathcal M_n(\mathcal{S})$ can simultaneously have integral spectrum, which is close related to the multiset $\mathcal{S}$.

\begin{definition}
	An integer multiset $\mathcal{S}$ is permutation-spectrally integral in dimension $n$ ($\PSI_n$), if every matrix in $\mathcal M_n(\mathcal{S})$ has integral spectrum.
\end{definition}

Thus, our problem can be formulated as follows.

\begin{problem}\label{Problem}
	Is there an $\PSI$ multiset $\mathcal{S}$ for $n\geq 2$ ? And if there is, what conditions should the $\PSI_n$ multiset $\mathcal{S}$ satisfy?
\end{problem}

A useful property of the integer multiset is first noted in the following definition. Then, the known results for $n=2$ obtained by Hall~\cite{Hall2025} are recapped, and generalisations to an arbitrary dimension $n$ are also presented. 

\begin{definition}
	The density $\nu(\mathcal{S})$ of an integer multiset $\mathcal{S}$ is defined as the number of nonzero elements in $\mathcal{S}$, or equivalently, the number of nonzero entries of every matrix in $\mathcal{M}_n(\mathcal{S})$. Thus, $0\leq \nu(\mathcal{S})\le n^2$.
\end{definition}

 A square in $\Z$ means a nonnegative perfect square; a square in $\Q$ means the square of a rational number.  It follows from the rational root theorem~\cite{AhmadiEtAl2009,CostelloWilliams2016} that a rational root of a monic polynomial is an integer, and consequently, complete splitting of an integer characteristic polynomial over $\Q$ is equivalent to integral roots.

\subsection{The discriminant criterion in dimension two}

Any multiset $\mathcal{S}$ of $2\times 2$ integers gives rise to the square matrix in a general form of
\begin{equation}
	M=\begin{pmatrix} a & b\\ c & d \end{pmatrix}
	\end{equation}
with $a, b, c, d \in \Z$. The characteristic polynomial of $M$ is given by
\begin{equation}
\chi_M(\lambda)=\lambda^2-(a+d)\lambda+(ad-bc),
\end{equation} 
and the discriminant is
\begin{equation}
\Delta(M)=(a+d)^2-4(ad-bc)=(a-d)^2+4bc.
\end{equation}

By using algebra analysis, Hall showed that $M$ has two integer eigenvalues if and only if $\Delta(M)$ is a square in $\mathbb Z$, and thus derived the discrimination criterion for any $\PSI_2$ multiset in~\cite{Hall2025}.

\begin{proposition}[Hall's discriminant criterion]\label{prop:hall}	An integer multiset $\mathcal{S}=\{a,b,c,d\}$ is $\PSI$ in dimension $two$ if and only if the following six	integers are squares in $\Z$:
	\begin{equation}\label{eq:six}
		\begin{aligned}
			&(a-d)^2+4bc, &&(b-c)^2+4ad,\\
			&(a-b)^2+4cd, &&(c-d)^2+4ab,\\
			&(a-c)^2+4bd, &&(b-d)^2+4ac.
		\end{aligned}
	\end{equation}
\end{proposition}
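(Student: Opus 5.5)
The proof has two parts: a single-matrix criterion, and then an enumeration of all arrangements of the multiset.

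\emph{Single-matrix criterion.} For a fixed $M\in\mathcal M_2(\mathcal S)$, the characteristic polynomial $\chi_M$ is monic with integer coefficients and roots
\[
\lambda_\pm=\tfrac12\bigl((a+d)\pm\sqrt{\Delta(M)}\bigr).
\]
If $\Delta(M)=k^2$ with $k\in\Z$, $k\ge 0$, then both roots are rational. By the rational root remark above, a rational root of a monic integer polynomial is an integer, so both roots are integers. One can also check this directly: $\Delta(M)\equiv (a+d)^2 \pmod 4$, so $k\equiv a+d\pmod 2$, and the numerators are even. Conversely, if $\lambda_\pm\in\Z$, then
\[
\Delta(M)=(\lambda_+-\lambda_-)^2
\]
is the square of an integer. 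Hence $M$ has integral spectrum if and only if $\Delta(M)$ is a square in $\Z$.

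\emph{Enumerating arrangements.} Next we show that the discriminants of the $24$ arrangements of $\{a,b,c,d\}$ take at most the six values in \eqref{eq:six}. In $\Delta(M)=(x-w)^2+4yz$, the pair $\{x,w\}$ occupies the diagonal and the pair $\{y,z\}$ the off-diagonal. The expression is symmetric under swapping $x\leftrightarrow w$ and under swapping $y\leftrightarrow z$. It therefore depends only on which unordered pair of positions in the multiset sits on the diagonal. Four labelled elements have $\binom42=6$ such pairs, namely $\{a,d\},\{b,c\},\{a,b\},\{c,d\},\{a,c\},\{b,d\}$, with the complementary pair off the diagonal. These give exactly the six expressions listed. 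Repeated values in the multiset only make some of the six coincide, which is harmless.

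Combining the two parts: $\mathcal S$ is $\PSI_2$ if and only if every arrangement has a square discriminant, which holds if and only if all six integers are squares. There is no real obstacle here. The only points needing care are the parity argument (or the appeal to the rational root theorem) and checking that the six diagonal/off-diagonal splits cover all $24$ matrices.
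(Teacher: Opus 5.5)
Your proof is correct and follows the same route as the paper. The paper gives no written proof and attributes the result to Hall, but it rests on exactly your per-matrix fact that $M$ has integral spectrum if and only if $\Delta(M)=(a-d)^2+4bc$ is a square in $\Z$ (via the rational root remark), and your observation that $\Delta$ depends only on the unordered diagonal pair supplies the details the citation leaves implicit.
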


It is interesting to note that Hall has also used the Pythagoras theorem to obtain a sufficient condition in the form of $a+b=c+d$ and its permutations, together with~(\ref{eq:six}), for the $\PSI_2$ multiset. However, it is not necessary, if the density $\nu(\mathcal{S})$ is less than $2$. Indeed, it follows directly from above that

\begin{corollary}\label{c2}
	For an integer multiset $\mathcal{S}$ with $\nu(\mathcal{S})\le2$, it is $\PSI_2$ if and only if it is either $\{a, 0, 0, 0\}$, i.e., $\nu(S)\le1$, or
	\begin{equation}\label{two}
		\{a, b, 0,0\},~~~~~\,{\rm where}~~\,a,b\ne0,~~ab=q^2,~~q\in\Z.
	\end{equation}
\end{corollary}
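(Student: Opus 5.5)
The plan is to apply Proposition~\ref{prop:hall} directly, splitting on the density $\nu(\mathcal{S})\in\{0,1,2\}$ and substituting zeros into the six expressions in~(\ref{eq:six}).

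\emph{Case $\nu(\mathcal{S})\le 1$.} Write $\mathcal{S}=\{a,0,0,0\}$ with $a$ possibly zero. In each of the six expressions $(x-y)^2+4zw$, the pair $\{z,w\}$ contains at least one zero. Hence every product $zw$ vanishes, and every expression equals $(x-y)^2$ with $\{x,y\}\subset\{a,0\}$. That is a square in $\Z$, so by Proposition~\ref{prop:hall} the multiset is $\PSI_2$. This matches the first alternative of the statement.

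\emph{Case $\nu(\mathcal{S})=2$.} Write $\mathcal{S}=\{a,b,0,0\}$ with $a,b\neq0$ and label $c=d=0$. The six expressions then become
\begin{equation*}
(a-0)^2+4b\cdot0=a^2,\quad (b-0)^2+4a\cdot 0=b^2,\quad (a-b)^2+4\cdot0\cdot0=(a-b)^2,
\end{equation*}
and, for the three partitions where $a$ and $b$ are both off the ``difference'' slot,
\begin{equation*}
(0-0)^2+4ab=4ab \quad(\text{occurring for the pairing } \{a,b\}\text{ vs. }\{0,0\}).
\end{equation*}
More carefully, I will list the three pair-partitions of $\{a,b,0,0\}$. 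In each partition, one pair enters as a difference and the other as a product, and both roles occur. This produces the values $a^2$, $b^2$ (twice each), $(a-b)^2$ and $4ab$. All of these except $4ab$ are automatically squares. So the criterion reduces to the single condition that $4ab$ is a square in $\Z$, which is equivalent to $ab=q^2$ for some $q\in\Z$. Since $ab\neq0$, this is exactly~(\ref{two}).

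The only point needing care is the bookkeeping: I must check that every one of the six entries in~(\ref{eq:six}) is accounted for after the substitution, so that no hidden condition is missed. Beyond that, the corollary is a direct specialization of Proposition~\ref{prop:hall}, and both implications follow at once because that proposition is an equivalence.
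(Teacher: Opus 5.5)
Your proposal is correct and follows the paper's route: the paper says Corollary~\ref{c2} follows directly from Proposition~\ref{prop:hall}, and your substitution of zeros into the six expressions is that verification. For $\nu(\mathcal{S})\le1$ every expression collapses to $(x-y)^2$, and for $\nu(\mathcal{S})=2$ the values are $a^2,a^2,b^2,b^2,(a-b)^2,4ab$, so only $4ab$ imposes a condition. Delete the stray remark about ``three partitions'' producing $4ab$: it arises from exactly one expression, $(c-d)^2+4ab$, as your later count correctly shows.
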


It is easy to verify that $a+b\neq 0$ for $a, b\neq 0$ or $a\neq b$, not satisfying the sufficient condition obtained in~\cite{Hall2025}.

\subsection{Generalisation to an arbitrary dimension}

Denote by $x^{[m]}$ for $m$ copies of $x$. Using combinatorial matrix theory~\cite{BrualdiCvetkovic2009} generalises Corollary~\ref{c2} to every dimension $n\geq 2$. 

\begin{proposition}\label{prop:two} For an integer multiset $\mathcal{S}$ with $n\ge2$ and $\nu(\mathcal{S})\le2$, it is $\PSI_n$ if and only if either $\mathcal{S}=\{a, 0^{[n^2-1]}\}$, i.e., $\nu(\mathcal{S})\le1$, or
	\begin{equation}\label{eq:two}
		\mathcal{S}=\{a, b, 0^{[n^2-2]}\},~~~~~\,{\rm where}~\,a,b\ne0,~ ab=q^2\in\Z.
	\end{equation}
\end{proposition}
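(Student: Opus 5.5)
We prove the statement by splitting on the positions of the (at most two) nonzero entries of a matrix $M\in\mathcal M_n(\mathcal S)$. We use the elementary fact that the characteristic polynomial is unchanged under simultaneous permutation of rows and columns, $M\mapsto PMP^{\top}$. This lets us reduce every placement to one of a few normal forms.

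\textbf{The case $\nu(\mathcal S)\le 1$.} Any $M\in\mathcal M_n(\mathcal S)$ is either zero or $a E_{ij}$. If $i\ne j$, then $M$ is nilpotent and has spectrum $\{0^{[n]}\}$. If $i=j$, the spectrum is $\{a,0^{[n-1]}\}$. In both cases the spectrum is integral, so every such multiset is $\PSI_n$.

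\textbf{The case $\nu(\mathcal S)=2$, with nonzero entries $a,b$ at positions $(i,j)$ and $(k,l)$.} We classify $M$ by the directed graph on $\{1,\dots,n\}$ with edges $i\to j$ and $k\to l$ (loops allowed). The characteristic polynomial is determined by the cycles of this graph. Since there are only two edges, the cycle structure is one of the following.
\begin{enumerate}[label=(\roman*)]
\item Two loops: the eigenvalues are $a$, $b$ and zeros.
\item One loop together with a non-loop edge: the eigenvalues are $a$ (or $b$) and zeros.
\item No cycles: $M$ is nilpotent.
\item A $2$-cycle $j=k$, $l=i$, $i\ne j$: then $\chi_M(\lambda)=\lambda^{n-2}(\lambda^2-ab)$.
\end{enumerate}
To justify this rigorously without graph language, conjugate by a permutation so that the relevant indices occupy positions $1,2$ (or $1,2,3$). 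Then $M$ is block triangular with a $2\times2$ or $3\times3$ block and zeros elsewhere. The characteristic polynomial of that small block can be checked directly; the $3\times 3$ path case $1\to2\to3$ is strictly upper triangular.

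\textbf{Conclusion.} Every arrangement has integral spectrum if and only if $\lambda^2-ab$ splits over $\Z$. By the rational root remark in Section~\ref{sec:prelim}, this holds exactly when $ab=q^2$ with $q\in\Z$. Case (iv) is realisable for every $n\ge2$ by putting $a$ at $(1,2)$ and $b$ at $(2,1)$, which gives necessity. The other cases always give integral spectra, which gives sufficiency. This recovers Corollary~\ref{c2} when $n=2$.

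The main obstacle is making the case enumeration exhaustive and clean. The essential points are that with two edges the only cycles possible are loops or a single $2$-cycle, and that acyclic placements are nilpotent. Conjugating to the upper-left corner handles both.
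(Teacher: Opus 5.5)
Your proof is correct and follows essentially the same route as the paper: the $(1,2),(2,1)$ placement giving $\lambda^{n-2}(\lambda^2-ab)$ for necessity, and the support digraph for sufficiency (a $2$-cycle versus an acyclic configuration that is permutation-similar to a triangular matrix). Your explicit treatment of loops (diagonal placements) fills in a case the paper dismisses with ``we only need to consider the off-diagonal case''. One small slip is in your non-graph justification: two disjoint off-diagonal edges $i\to j$, $k\to l$ involve four indices, not two or three, but after relabelling $M=aE_{12}+bE_{34}$ is strictly upper triangular, so nothing breaks.
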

\begin{proof}
	Consider first the $\nu(\mathcal{S})=1$ case where there is only one single nonzero element in $\mathcal{S}$ of $n^2$ integers. If this nonzero entry is on the diagonal of a square matrix, then it corresponds to an integer eigenvalue and the rest eigenvalues are zeros. If it is off the diagonal, then the corresponding matrix is nilpotent, which admits all integer eigenvalues. Thus, any square matrix with a single nonzero integer entry always has an integral spectrum.
	
	Suppose then there are two nonzero elements $a, b$ in $\mathcal{S}$. Place these two entries at positions $(1,2)$ and $(2,1)$ of a square matrix $M$, so its eigenvalues are $\pm\sqrt{ab}$ and $n-2$ zeros, thus proving the condition~\eqref{eq:two} is necessary. Conversely, denote by $(i_1, j_1) $ and $(i_2, j_2)$ the occupied positions in any matrix $M$ for two nonzero entries, and we only need to consider the off-diagonal case, i.e., $i_\alpha\neq j_\alpha$ for $\alpha=1, 2$. Indeed, these occupied positions form a directed graph. If they form a directed $2$-cycle, then the above spectral calculation applies. Otherwise, this graph is acyclic, of which the matrix can be transformed to a triangular matrix whose diagonal entries are integers by a permutation similarity. This completes the proof that every square matrix with two nonzero integers  satisfying~\eqref{eq:two} has integral spectrum and thus $\mathcal{S}$ is $\PSI_n$.
\end{proof}

Generally, a useful observation can be obtained as follows.

\begin{lemma}\label{lem:scale} Suppose every element of an integer multiset $\mathcal{S}$ is divisible by an integer $k>0$. Then, $\mathcal{S}$ is $\PSI_n$ if and only if $\mathcal{S}/k$ is $\PSI_n$.
\end{lemma}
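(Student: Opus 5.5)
The plan is to relate the spectra of matrices over $\mathcal{S}$ and over $\mathcal{S}/k$ through the scaling map $M\mapsto M/k$. The point is that this map is a bijection $\mathcal M_n(\mathcal{S})\to\mathcal M_n(\mathcal{S}/k)$. Since every element of $\mathcal{S}$ is divisible by $k$, the multiset $\mathcal{S}/k=\{s/k : s\in\mathcal{S}\}$ is again an integer multiset of $n^2$ elements. A matrix $M$ has entries exactly the elements of $\mathcal{S}$ if and only if $M/k$ has entries exactly the elements of $\mathcal{S}/k$, placed in the same positions. So dividing by $k$ entrywise matches every arrangement on one side with exactly one arrangement on the other, and the rest of the argument is a statement about a single matrix $M$ and $N=M/k$.

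For such a pair the characteristic polynomials are related by
\begin{equation*}
\chi_M(\lambda)=\det(\lambda I-kN)=k^n\,\chi_N(\lambda/k).
\end{equation*}
Hence the eigenvalues of $M$, counted with multiplicity, are exactly $k$ times those of $N$.

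For the direction ``$\mathcal{S}/k$ is $\PSI_n$ $\Rightarrow$ $\mathcal{S}$ is $\PSI_n$'', take any $M$ and its partner $N$. By hypothesis $N$ has integral spectrum, so its eigenvalues $\mu_i$ lie in $\Z$. The eigenvalues of $M$ are then $k\mu_i\in\Z$.

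For the converse, suppose $M$ has integer eigenvalues $\lambda_i$. Then $N$ has eigenvalues $\lambda_i/k\in\Q$. These are roots of the monic integer polynomial $\chi_N$, so the rational root theorem recalled in the preliminaries forces $\lambda_i/k\in\Z$. Thus $N$ has integral spectrum.

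This last step is the only one that needs any care. Divisibility of each $\lambda_i$ by $k$ is not immediate from $M$ alone; it comes from the integrality of $N$'s entries, via the monic integer characteristic polynomial $\chi_N$. Everything else is bookkeeping about the bijection of arrangements.
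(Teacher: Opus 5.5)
Your proposal is correct and follows essentially the same route as the paper: scale each arrangement by $k$, get the easy direction from $\lambda_i = k\mu_i$, and get the converse from the rational root theorem applied to the monic integer polynomial $\chi_N$. Your version is somewhat more explicit than the paper's, since you spell out the bijection $\mathcal M_n(\mathcal{S})\to\mathcal M_n(\mathcal{S}/k)$ and the identity $\chi_M(\lambda)=k^n\chi_N(\lambda/k)$, both of which the paper leaves implicit.
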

\begin{proof}
	A matrix realisation of the integer multiset $\mathcal{S}$ is denoted by $M$, and correspondingly, there is $M/k$ for $\mathcal{S}/k$. If $\mathcal{S}$ is $\PSI_n$, then each eigenvalue of $M$ is an integer, and obviously, each eigenvalue of $M/k$ is rational. As the characteristics polynomial of $M/k$ is monic in $\Z$ with rational roots, $M/k$ has all integer eigenvalues. Applying the same argument to every pair of $M$ and $M/k$ yields that $\mathcal{S}/k$ is also $\PSI_n$. Then, it is straightforward to verify that if $M/k$ has integer eigenvalues, all eigenvalues of $M$ are integers too, implying that $\mathcal{S}$ is $\PSI_n$.
\end{proof}

It follows immediately that the $\PSI$ property remains unchanged under factor reduction. In particular, denote by $e_2(x)$ for the exponent of $2$ in a nonzero integer $x$, and set $e_2(0)=+\infty$. Background
on the elementary valuation properties can be found in~\cite{Gouvea2020}.
\begin{definition}
	A nonzero integer multiset is dyadically primitive if at least one of its elements is odd.
\end{definition}

Every nonzero multiset becomes dyadically primitive after division by its largest common power of $2$, without changing the $\PSI$ property. Moreover, entrywise reduction of an integer matrix $M$ modulo $2$ gives a binary matrix $\overline{M}$ containing only zeros and ones, with the characteristic polynomial
\begin{equation}\label{reduction}
	\chi_{\overline{M}}(\lambda)=\overline{\chi_M(\lambda)}.
\end{equation}
Here, the bar on the polynomial means coefficientwise reduction. Further, if $\chi_M$ splits completely over $\Z$, then $\chi_{\overline M}$ splits completely over $\F_2$.

\section{$\PSI$ in dimension three}\label{sec:three}

Before presenting our main results, we first establish some useful arithmetic ingredients about the dyadically primitive multiset in dimension three. Denote by $o(\mathcal{S})$ the number of odd integers in a multiset $\mathcal{S}$, and evident, $o(\mathcal{S})\leq \nu(\mathcal{S})$. 

\begin{lemma}\label{lem:binary}
	A dyadically primitive $\PSI_3$ multiset $\mathcal{S}$ satisfies $o(\mathcal{S})\in\{1,2,9\}$. Equivalently, there is no dyadically primitive $\PSI_3$ multiset with $3\leq o(\mathcal{S})\leq 8$.
\end{lemma}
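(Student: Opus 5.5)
The plan is to reduce modulo $2$ using~\eqref{reduction}. Let $\mathcal{S}$ be dyadically primitive with $k=o(\mathcal{S})$ odd entries, where $3\le k\le 8$. The key point is that we are free to choose where the odd entries go. So for each such $k$ I will exhibit a binary $3\times 3$ matrix $B_k$ with exactly $k$ ones whose characteristic polynomial does \emph{not} split completely over $\F_2$. Then place the $k$ odd elements of $\mathcal{S}$ at the positions of the ones of $B_k$ and the $9-k$ even elements at its zero positions. This gives a matrix $M\in\mathcal M_3(\mathcal{S})$ with $\overline M=B_k$. If $\mathcal{S}$ were $\PSI_3$, then $\chi_M$ would split over $\Z$, so $\chi_{B_k}=\overline{\chi_M}$ would split over $\F_2$, which is a contradiction. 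The case $k=0$ is excluded by dyadic primitivity, so the surviving values are $k\in\{1,2,9\}$. This proves the lemma.

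To find the $B_k$ I will use a criterion for monic cubics over $\F_2$. Such a cubic fails to split exactly when it is irreducible ($\lambda^3+\lambda+1$ or $\lambda^3+\lambda^2+1$) or has the irreducible factor $\lambda^2+\lambda+1$. For a binary matrix, $\chi_B=\lambda^3+t\lambda^2+s\lambda+d$, where $t$ is the trace, $s$ is the sum of the principal $2\times2$ minors and $d$ is the determinant, all taken mod $2$. Each case is therefore a short computation. The witnesses I intend to use are as follows.
\begin{itemize}[leftmargin=*]
\item $k=3$: the $3$-cycle permutation matrix, with $\chi=\lambda^3+1=(\lambda+1)(\lambda^2+\lambda+1)$.
\item $k=4$: the companion matrix of $\lambda^3+\lambda+1$, namely rows $(0,0,1),(1,0,1),(0,1,0)$.
\item $k=5,6$: block upper-triangular matrices with upper-left block $\begin{pmatrix}1&1\\1&0\end{pmatrix}$, whose characteristic polynomial is $\lambda^2+\lambda+1$ mod $2$. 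We take $(3,3)$ entry $1$ and put one, respectively two, ones in positions $(1,3),(2,3)$. Block triangularity keeps the factor $\lambda^2+\lambda+1$.
\item $k=7$: rows $(1,1,0),(0,1,1),(1,1,1)$. Here $t=1$, $s=0$, $d=1$, so $\chi=\lambda^3+\lambda^2+1$, which is irreducible.
\item $k=8$: the all-ones matrix with the $(1,3)$ entry set to $0$. Here $t=1$, $s=1$, $d=0$, so $\chi=\lambda(\lambda^2+\lambda+1)$.
\end{itemize}

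The only real obstacle is the dense cases $k=7,8$. There block-triangular tricks run out, and naive choices give split polynomials; for example, deleting $(1,3)$ and $(3,1)$ from the all-ones matrix gives $(\lambda+1)^3$. So I will check the invariants $t,s,d$ explicitly for these two matrices. The other cases are immediate from the structure of the chosen matrices.
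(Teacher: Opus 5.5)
Your proposal is correct and is essentially the paper's proof: you reduce modulo $2$ via~\eqref{reduction}, and for each $3\le k\le 8$ you exhibit a binary matrix with exactly $k$ ones whose characteristic polynomial fails to split over $\F_2$. Your witness matrices differ from the paper's table (for example the $3$-cycle for $k=3$ and the companion matrix of $\lambda^3+\lambda+1$ for $k=4$), but each one checks out, including your explicit $(t,s,d)$ computations for $k=7,8$.
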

\begin{proof}
	Suppose, to the contrary, that a dyadically primitive $\PSI_3$ multiset $\mathcal{S}$ satisfies $3\leq o(\mathcal{S})\le8$. After entrywise reduction module $2$, every $3\times3$ matrix in $\mathcal{M}_3(\mathcal{S})$ becomes a binary with $o(\mathcal{S})$ ones, and correspondingly, every such binary matrix must have a characteristic polynomial that splits over $\F_2$.

	For $o(\mathcal{S})=3,\ldots,8$, consider the following binary matrix after reduction of suitable matrix realisations of $\mathcal{S}$ and compute their characteristic polynomials
	 \[ \begin{array}{c|c|c} o(\mathcal{S})&M&\chi_{M}(\lambda)\\ \hline 
	 	3&\left(\begin{smallmatrix}1&1&0\\1&0&0\\0&0&0\end{smallmatrix}\right) &\lambda(\lambda^2+\lambda+1)\\ 
	 	4&\left(\begin{smallmatrix}1&1&1\\1&0&0\\0&0&0\end{smallmatrix}\right) &\lambda(\lambda^2+\lambda+1)\\ 
	 	5&\left(\begin{smallmatrix}1&1&1\\1&0&1\\0&0&0\end{smallmatrix}\right) &\lambda(\lambda^2+\lambda+1)\\ 
	 	6&\left(\begin{smallmatrix}1&1&1\\1&1&0\\1&0&0\end{smallmatrix}\right) &\lambda^3+\lambda+1\\ 
	 	7&\left(\begin{smallmatrix}1&1&1\\1&1&0\\0&1&1\end{smallmatrix}\right) &\lambda^3+\lambda^2+1\\ 
	 	8&\left(\begin{smallmatrix}1&1&1\\1&1&1\\1&0&1\end{smallmatrix}\right) &\lambda(\lambda^2+\lambda+1). \end{array} \] 
 	
 It is found that the characteristic polynomials contain the irreducible factor $\lambda^2+\lambda+1$ for $o(\mathcal{S})=3,4,5,8$, and the cubic polynomials have no root in $\F_2$ and are irreducible for $o(\mathcal{S})=6,7$. None of these polynomials splits completely over $\F_2$, thus contradicting the assumption that $\mathcal{S}$ is $\PSI_3$.
\end{proof}

Particularly, any $3\times 3$ integer matrix can be written as
\begin{equation}\label{3matrix}
	M=\begin{pmatrix}a&b&c\\d&e&f\\g&h&i\end{pmatrix},
\end{equation}
and its characteristic polynomial is given by
\begin{equation}
	\chi_M(\lambda)=\lambda^3-T\lambda^2+C\lambda-D,
\end{equation}
where the characteristic coefficients are
\begin{align}
    	T&=a+e+i, \label{trace}\\
		C&=ae+ai+ei-bd-cg-fh, \label{minor}\\
		D&=aei+bfg+cdh-ceg-bdi-afh. \label{determinant}
\end{align}

\begin{lemma}\label{lem:cubic}
 	Assume that a matrix $M$ has the characteristic polynomial in $\Z$, i.e., $\chi_M(\lambda)\in\Z(\lambda)$, and its determinant $D\ne0$ with $d=e_2(D)>0$. If
	\begin{equation}\label{eq:cubic-test}
		e_2(T)>\frac d3,\qquad e_2(C)>\frac{2d}{3},
	\end{equation}
	then its characteristic polynomial does not split completely over $\Q$.
\end{lemma}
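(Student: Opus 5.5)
We argue by contradiction, using $2$-adic valuations of the roots. Suppose $\chi_M(\lambda)=\lambda^3-T\lambda^2+C\lambda-D$ splits completely over $\Q$. Its roots are rational roots of a monic integer polynomial, so by the rational root theorem they are integers $\lambda_1,\lambda_2,\lambda_3$. By Vieta's formulas,
\[ T=\lambda_1+\lambda_2+\lambda_3,\qquad C=\lambda_1\lambda_2+\lambda_1\lambda_3+\lambda_2\lambda_3,\qquad D=\lambda_1\lambda_2\lambda_3 . \]
Since $D\ne0$, every $\lambda_j$ is nonzero. Set $v_j=e_2(\lambda_j)$, so that $v_1+v_2+v_3=d$. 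Order the roots so that $v_1\le v_2\le v_3$.

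First, we show $v_1=v_2$. Because $v_1\le d/3$ and the valuations sum to $d$, there are two cases.
\begin{itemize}
\item If $v_1<v_2$, the ultrametric property gives $e_2(T)=v_1\le d/3$. This contradicts $e_2(T)>d/3$.
\item Hence $v_1=v_2=:u$, with $u\le d/3$.
\end{itemize}

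Next, we determine $v_3$ by examining $C$.
\begin{itemize}
\item Suppose $v_3>u$. The terms $\lambda_1\lambda_3$ and $\lambda_2\lambda_3$ have valuation $u+v_3$, while $\lambda_1\lambda_2$ has valuation $2u<u+v_3$. So $e_2(C)=2u$. Moreover $2u\le 2d/3$, because $3u\le u+u+v_3=d$. This contradicts $e_2(C)>2d/3$.
\item Hence $v_3=u$ as well. Then $d=3u$, and $u=d/3$ is a positive integer because $d>0$.
\end{itemize}

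It remains to rule out the equal case $v_1=v_2=v_3=u$. Write $\lambda_j=2^u\mu_j$ with every $\mu_j$ odd. Then
\[ T=2^u(\mu_1+\mu_2+\mu_3),\qquad C=2^{2u}(\mu_1\mu_2+\mu_1\mu_3+\mu_2\mu_3). \]
A sum of three odd numbers is odd. Likewise, the sum of the three pairwise products is a sum of three odd numbers, so it is odd. Therefore $e_2(T)=u=d/3$, which again contradicts the hypothesis $e_2(T)>d/3$. (The condition on $C$ also fails here, since $e_2(C)=2d/3$.)

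Every case leads to a contradiction, so $\chi_M$ does not split completely over $\Q$.

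The only delicate points are the ultrametric steps, where the valuation of a sum equals the minimum valuation of its terms. This requires the minimum to be attained by a single term, or, as in the final case, a parity argument when all terms tie. We check each step against this requirement explicitly. Note also that $d>0$ guarantees $u\ge1$. However, the argument does not actually need this: it only compares valuations.
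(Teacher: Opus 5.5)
Your proof is correct and follows essentially the same route as the paper: integrality of the roots via the rational root theorem, then a case split on the ordered $2$-adic valuations ($v_1<v_2$ controlled by $T$, $v_1=v_2<v_3$ controlled by $C$, and the all-equal case settled by the parity of a sum of three odd numbers). Your side remark is also correct: the hypothesis $d>0$ is not actually needed for the argument.
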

\begin{proof}Suppose that the characteristic polynomial of $M$ has three rational roots, so they should be nonzero integers, due to the rational root theorem and the nonzero determinant. Define $\rho_1\le\rho_2\le\rho_3$ as the $2$-adic valuations of these three integer eigenvalues of $M$. Following from Vieta's relations yields $\rho_1+\rho_2+\rho_3=d=e_2(D)>0$.
	
	If $\rho_1<\rho_2$, then the eigenvalue of valuation $\rho_1$ is the unique term of least valuation in the matrix trace~(\ref{trace}). Thus, $\rho_1=e_2(T)>d/3$, forcing all three eigenvalue valuations to be at least $d/3$ and hence contradicting that the sum of three valuations is $d$. If $\rho_1=\rho_2<\rho_3$, then the product of the first two eigenvalues is the unique term of least valuation in $C$ as per~(\ref{minor}), and thus $e_2(C)=\rho_1+\rho_2>2d/3$, again leading to the same contradiction. For the remaining possibility of $\rho_1=\rho_2=\rho_3=t$, dividing each eigenvalue by $2^t$ yields three odd integers, and their sum is odd too. Thus, $e_2(T)=t=d/3$, contrary to~\eqref{eq:cubic-test}. This completes the proof.
\end{proof}

This lemma enables to derive the following result.

\begin{lemma}\label{lem:two-odd}
	A dyadically primitive $\PSI$ multiset $\mathcal{S}$ with $\nu(\mathcal{S})\ge3$ cannot have exactly two odd elements. 
\end{lemma}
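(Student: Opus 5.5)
The plan is to exhibit one explicit arrangement of $\mathcal{S}$ in $\mathcal M_3(\mathcal{S})$ whose characteristic polynomial satisfies the hypotheses of Lemma~\ref{lem:cubic}. Its characteristic polynomial then fails to split over $\Q$, so $\mathcal{S}$ is not $\PSI_3$.

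\textbf{Setup.} Suppose $\mathcal{S}$ is dyadically primitive with exactly two odd elements $u,v$ and $\nu(\mathcal{S})\ge 3$. Then at least one of the remaining seven elements is a nonzero even integer. Among all nonzero even elements, choose $w$ with minimal $2$-adic valuation $k=e_2(w)\ge1$. Every other element of $\mathcal{S}$ besides $u,v,w$ is even, and either has valuation at least $k$ or is zero (valuation $+\infty$).

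\textbf{Arrangement.} Place the three special entries on a directed $3$-cycle, avoiding any $2$-cycle, so that the odd entries never pair up in $C$. In the notation of~\eqref{3matrix}, take $b=u$, $f=v$, $g=w$, and fill the remaining six positions $a,c,d,e,h,i$ arbitrarily with the other elements. Each of these six entries therefore has valuation at least $k$.

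\textbf{Valuation checks.} We verify the three hypotheses of Lemma~\ref{lem:cubic} in turn, using~\eqref{trace}--\eqref{determinant}.
\begin{itemize}[leftmargin=*]
\item \emph{Determinant.} The term $bfg=uvw$ has valuation exactly $k$. For the other five terms:
\begin{itemize}
\item $aei$, $cdh$ and $ceg$ each have valuation at least $3k$;
\item $bdi=u\,d\,i$ and $afh=a\,v\,h$ each have valuation at least $2k$.
\end{itemize}
Since $2k>k$, the term $uvw$ is the unique term of least valuation. Hence $e_2(D)=k$, and in particular $D\neq0$ and $d=k>0$.
\item \emph{Trace.} $T=a+e+i$ is a sum of entries of valuation at least $k$, so $e_2(T)\ge k>k/3$.
\item \emph{Second coefficient.} In $C=ae+ai+ei-bd-cg-fh$:
\begin{itemize}
\item $ae$, $ai$, $ei$ and $cg$ each have valuation at least $2k$;
\item $bd=ud$ and $fh=vh$ each have valuation at least $k$.
\end{itemize}
Thus $e_2(C)\ge k>2k/3$. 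If $C=0$ this holds trivially, since $e_2(0)=+\infty$.
\end{itemize}

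\textbf{Conclusion.} Condition~\eqref{eq:cubic-test} holds, so Lemma~\ref{lem:cubic} shows $\chi_M$ does not split completely over $\Q$. Hence $M$ lacks integral spectrum and $\mathcal{S}$ is not $\PSI_3$.

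The only delicate point is that $e_2(D)$ must be computed exactly, not merely bounded below. This is why $w$ is chosen of minimal valuation among the nonzero even entries: it forces every competing term of $D$ to have valuation at least $2k$. The same choice also guarantees the lower bounds on $e_2(T)$ and $e_2(C)$.
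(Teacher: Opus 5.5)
Your proposal is correct and matches the paper's proof: the paper also places the two odd entries and a nonzero even entry $x$ of minimal $2$-adic valuation $\alpha$ on the directed $3$-cycle at positions $(1,2),(2,3),(3,1)$. It then reads off $e_2(T)\ge\alpha$, $e_2(C)\ge\alpha$, $e_2(D)=\alpha$ and applies Lemma~\ref{lem:cubic} with $d=\alpha$, exactly as your term-by-term valuation check does.
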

\begin{proof}
	Suppose, to the contrary, that a dyadically primitive $\PSI$ multiset $\mathcal{S}$ has two odd elements $u, v$, with $\nu(\mathcal{S})\ge3$.  Consider its matrix realisation as
	\begin{equation}\label{eq:two-odd-placement}
		M=\begin{pmatrix}a&u&c\\d&e&v\\x&h&i\end{pmatrix},
	\end{equation}
where $x$ is a nonzero even entry of least positive $2$-adic valuation $\alpha=e_2(x)$, and except for $u, v$, all other entries in $M$ have valuation at least $\alpha$, with zeros allowed. 
	
	Substituting matrix entries directly into the characteristic coefficients in~(\ref{trace})-(\ref{determinant}) yields
	\begin{equation}
		T=a+e+i,\quad C=ae+ai+ei-ud-cx-vh,
	\end{equation}
	and
	\begin{equation}\label{2determinant}
		D=aei+uvx+cdh-cex-udi-avh.
	\end{equation}
	In the determinant~(\ref{2determinant}), the term $uvx$ has valuation $\alpha$ and every other term is zero or has valuation at least $2\alpha$. Consequently,
	\begin{equation}\label{eq:two-odd-bounds}
		e_2(T)\ge\alpha,\qquad e_2(C)\ge\alpha,\qquad e_2(D)=\alpha.
	\end{equation}
	These inequalities obey the conditions in Lemma~\ref{lem:cubic}, by choosing $d=\alpha>0$, so the characteristic polynomial of $M$ cannot split completely over $\Q$ and hence $M$ does not have integral spectrum. This contradicts that $\mathcal{S}$ is $\PSI$.
\end{proof}

Combining these lemmas from~\ref{lem:binary} to~\ref{lem:two-odd} yields the first main result:

\begin{theorem}\label{thm:three}
	Let $\mathcal{S}$ be a multiset of nine integers containing at least one zero. Then $\mathcal{S}$ is $\PSI_3$ if and only if it has at most one nonzero element, or
	\begin{equation}\label{eq:three-class}
		\mathcal{S}=\{a, b, 0^{[7]}\},\qquad a,b\ne0,\qquad ab=q^2\quad, q\in\Z.
	\end{equation}
	In particular, no $\PSI_3$ multiset has $3\le\nu(\mathcal{S})\le8$.
\end{theorem}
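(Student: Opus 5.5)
The cases $\nu(\mathcal{S})\le 2$ are handled by Proposition~\ref{prop:two} with $n=3$: such an $\mathcal{S}$ is $\PSI_3$ exactly when it has at most one nonzero element or has the form \eqref{eq:three-class}. So the whole task is to show that no multiset with $3\le\nu(\mathcal{S})\le 8$ is $\PSI_3$. The upper bound $\nu\le 8$ is automatic, since $\mathcal{S}$ contains a zero. Suppose such an $\mathcal{S}$ were $\PSI_3$. Divide by the largest power of $2$ dividing all its elements. By Lemma~\ref{lem:scale} the quotient is still $\PSI_3$, its density is unchanged, it still contains a zero, and it is dyadically primitive. We may therefore assume $\mathcal{S}$ is dyadically primitive.

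Next I would bound the number of odd elements. Zero is even, so $o(\mathcal{S})\le\nu(\mathcal{S})\le 8$. Lemma~\ref{lem:binary} then gives $o(\mathcal{S})\in\{1,2\}$. Since $\nu(\mathcal{S})\ge 3$, Lemma~\ref{lem:two-odd} excludes $o(\mathcal{S})=2$. It remains to rule out a dyadically primitive $\mathcal{S}$ with exactly one odd element $u$, at least two nonzero even elements, and at least one zero.

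For this case I would adapt the valuation argument of Lemma~\ref{lem:two-odd}. Let $\alpha\ge1$ be the least valuation among the nonzero even entries, attained by some $x$. Let $y$ be another nonzero even entry, with $e_2(y)=\beta\ge\alpha$. The aim is a placement where $e_2(D)$ is small and one of the conditions of Lemma~\ref{lem:cubic} is forced.

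A first attempt is to put $u$ on the diagonal. Then $T$ is odd, which breaks Lemma~\ref{lem:cubic}, so I would instead place $u$ off the diagonal, in the cycle $(1,2),(2,3),(3,1)$. Set $M_{12}=u$, $M_{23}=x$, $M_{31}=y$, put a zero at a diagonal position, and fill the remaining slots arbitrarily. Then $D$ contains the term $uxy$ of valuation $\alpha+\beta$. The other terms of $D$ are products of three entries, at least two of them even, possibly involving $u$ once. So they have valuation at least $\alpha+\beta$ as well, and cancellation is a real risk. This is the main obstacle.

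To avoid it, I would use the opposite $3$-cycle structure together with the zero. Put $u$ at $(1,2)$, $x$ at $(2,3)$, $y$ at $(3,1)$, and the zero at $(2,1)$, the transpose slot of $u$. Then the terms $cdh$ and $udi$ vanish or drop in degree, and the remaining terms either contain three even entries or contain $u$ with two evens whose positions I control. Putting the smallest-valuation remaining entries on the diagonal and zeros where possible should make $uxy$ the unique term of least valuation, giving $e_2(D)=\alpha+\beta=:d$.

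Then I check the hypotheses of Lemma~\ref{lem:cubic}. The trace is a sum of even entries, so $e_2(T)\ge\alpha$. The only term of $C$ that could involve $u$ is $-ud$ with $d=M_{21}=0$, and every other term is a product of two evens, so $e_2(C)\ge 2\alpha$. If $\beta=\alpha$, then $d=2\alpha$, and we need $\alpha>2\alpha/3$ and $2\alpha>4\alpha/3$, both of which hold. If $\beta>\alpha$ the thresholds $d/3$ and $2d/3$ may be too large. In that case I would choose $y$ of least valuation among the entries other than $x$ and argue similarly, or choose $x,y$ to both have minimal valuation when possible. Otherwise I would fall back on a mod-$4$ or mod-$2^{\alpha+1}$ reduction of $M/$(suitable scaling) to produce a non-split polynomial.

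Lemma~\ref{lem:cubic} then shows that $\chi_M$ does not split over $\Q$, so $M$ does not have integral spectrum, contradicting that $\mathcal{S}$ is $\PSI_3$. This completes the proposed proof.
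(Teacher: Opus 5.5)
\textbf{Gap in the one-odd-element case.} Your proof follows the paper's route up to the last case. You use Proposition~\ref{prop:two} for $\nu(\mathcal S)\le 2$, dyadic reduction via Lemma~\ref{lem:scale}, then Lemma~\ref{lem:binary} and Lemma~\ref{lem:two-odd}. Your final placement is also the paper's: $u$ at $(1,2)$, the zero at $(2,1)$, $x$ at $(2,3)$, $y$ at $(3,1)$. However, the $o(\mathcal S)=1$ case is not finished.

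\begin{itemize}
\item Your bounds $e_2(T)\ge\alpha$ and $e_2(C)\ge 2\alpha$ meet the hypotheses of Lemma~\ref{lem:cubic}, with $d=\alpha+\beta$, only when $\beta<2\alpha$. For example, $\alpha=1$, $\beta=2$ requires $e_2(T)\ge 2$ and $e_2(C)\ge 3$, which your bounds do not give.
\item For the case $\beta>\alpha$ you offer only unverified fallbacks (``argue similarly'', a mod-$4$ reduction).
\item You only assert, with ``should'', that $uxy$ is the unique term of least valuation in $D$. Your heuristic of putting the smallest-valuation remaining entries on the diagonal works against you, because it lowers $e_2(aei)$.
\item With an arbitrary $y$, terms such as $aei$ or $axh$ can have valuation $3\alpha<\alpha+\beta$.
\end{itemize}

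\textbf{The missing step.} Tie the choice of $y$ to the valuations of all other entries: take $x,y$ to be the two nonzero even elements of smallest valuation, $\alpha=e_2(x)\le\beta=e_2(y)$. Then each of the remaining five entries, zeros included, has valuation at least $\beta$ wherever it is placed. For the matrix $\begin{pmatrix}a&u&c\\0&e&x\\y&h&i\end{pmatrix}$ one has
\[
T=a+e+i,\qquad C=ae+ai+ei-cy-xh,\qquad D=aei+uxy-cey-axh .
\]
This gives:
\begin{itemize}
\item $e_2(T)\ge\beta$;
\item $e_2(C)\ge\alpha+\beta$;
\item $e_2(D)=\alpha+\beta$, because the other three terms of $D$ have valuations at least $3\beta$, $3\beta$ and $\alpha+2\beta$, all exceeding $\alpha+\beta$.
\end{itemize}
With $d=\alpha+\beta$, the conditions of Lemma~\ref{lem:cubic} become $2\beta>\alpha$ and $\alpha+\beta>\tfrac23(\alpha+\beta)$. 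Both hold automatically for $1\le\alpha\le\beta$.

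This is how the paper closes the case. It needs no split into $\beta=\alpha$ versus $\beta>\alpha$, no particular arrangement of the remaining entries, and no further congruence argument.
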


\begin{proof} If a multiset $\mathcal{S}$ of nine integers has at least one zero, then its density $\nu(\mathcal{S})\leq 8$. First, it follows from Proposition~\ref{prop:two} that $\nu(\mathcal{S})\le2$ can be settled and  $3\le\nu(\mathcal{S})\le8$ is left. Next, implementing dyadical reduction on the latter case and using Lemma~\ref{lem:scale} leads to the dyadically primitive multiset $\mathcal{S}$ with $o(\mathcal{S})\leq \nu(\mathcal{S})\leq 8$. Then, Lemma~\ref{lem:binary} excludes $3\leq o(\mathcal{S})\leq 8$, and further Lemma~\ref{lem:two-odd} excludes $o(\mathcal{S})=2$.
	
Finally, the remaining $o(\mathcal{S})=1$ is excluded as follows. Let $u$ be the unique odd element. Since $\nu(\mathcal{S})\ge3$, the multiset contains at least two nonzero even elements. Choose two nonzero even elements $x$ and $y$ whose valuations are the two smallest, and label them so that $\alpha=e_2(x)\le\beta=e_2(y)$. Arrange $u,x,y$, together with a zero entry, as the following matrix realisation of $\mathcal{S}$
	\begin{equation}\label{eq:one-odd-placement}
		M=\begin{pmatrix}a&u&c\\0&e&x\\y&h&i\end{pmatrix}.
	\end{equation}
Every remaining entry then has valuation at least $\beta$. Correspondingly,
	\begin{equation}
		T=a+e+i,\quad C=ae+ai+ei-cy-xh,\quad D=aei+uxy-cey-axh.
	\end{equation}
There is $e_2(T)\ge\beta$, and every term of $C$ has valuation at least $\alpha+\beta$. In the determinant, the term $uxy$ has valuation $\alpha+\beta$ and the others have valuations at least $3\beta,3\beta,$ and $\alpha+2\beta$, respectively. Thus, we have
	\begin{equation}\label{eq:one-odd-bounds}
		e_2(T)\ge\beta,\qquad e_2(C)\ge\alpha+\beta,\qquad e_2(D)=\alpha+\beta.
	\end{equation}
	Set $d=\alpha+\beta$. Since $0<\alpha\le\beta$, one has $\beta>d/3$ and $d>2d/3$. Lemma~\ref{lem:cubic} excludes the complete splitting characteristic polynomial of $M$, further excluding that $\mathcal{S}$ is $\PSI_3$.
\end{proof}

For the remaining $\nu(\mathcal S)=9$, there exists a trivial $\PSI_3$ multiset which contains nine equal elements $a$, so every matrix has three integer eigenvalues $3a,0,0$. It is conjectured that this is the only fully nonzero case.

\begin{conjecture}\label{conj:dense-rigidity} If a multiset of nine nonzero integers is $\PSI$ in dimension three, then all nine elements are equal. \end{conjecture}

Performing dyadical reduction on the multiset with $\nu(\mathcal{S})=9$ leads to

\begin{corollary}\label{cor:dense-dyadic-reduction} 
	After dyadic reduction, every fully nonzero $\PSI_3$ multiset satisfies either $o(\mathcal S)=1$ or $o(\mathcal S)=9. $
\end{corollary}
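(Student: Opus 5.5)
Since $\nu(\mathcal S)=9$, every element of $\mathcal S$ is nonzero. By Lemma~\ref{lem:scale} we may divide by the largest power of $2$ dividing all nine elements. This operation keeps every element nonzero and preserves the $\PSI_3$ property, and it makes $\mathcal S$ dyadically primitive. Lemma~\ref{lem:binary} then gives $o(\mathcal S)\in\{1,2,9\}$.

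It remains to rule out $o(\mathcal S)=2$. Here we apply Lemma~\ref{lem:two-odd}, whose hypotheses are that $\mathcal S$ is a dyadically primitive $\PSI$ multiset with $\nu(\mathcal S)\ge 3$. Both hold, since $\nu(\mathcal S)=9$. So exactly two odd elements cannot occur, and $o(\mathcal S)\in\{1,9\}$.

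The one point to check is that the proof of Lemma~\ref{lem:two-odd} does not secretly use a zero entry. It places the two odd elements $u,v$ at positions $(1,2)$ and $(2,3)$, and a nonzero even element $x$ of least valuation $\alpha>0$ at $(3,1)$. Such an $x$ exists because the other seven elements are nonzero and even. Every other entry then has valuation at least $\alpha$, so $e_2(D)=\alpha$ while $e_2(T),e_2(C)\ge\alpha$. Lemma~\ref{lem:cubic} then applies with $d=\alpha$. No zero was needed, so the corollary follows directly and I expect no real obstacle.
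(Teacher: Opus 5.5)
Your proposal is correct and follows the paper's own route: reduce dyadically via Lemma~\ref{lem:scale}, use Lemma~\ref{lem:binary} to get $o(\mathcal S)\in\{1,2,9\}$, and exclude $o(\mathcal S)=2$ by Lemma~\ref{lem:two-odd}. Your check that the placement~\eqref{eq:two-odd-placement} needs only $\nu(\mathcal S)\ge3$ and no zero entry is accurate. It also explains why the $o(\mathcal S)=2$ exclusion carries over to the fully nonzero case, whereas the zero-dependent $o(\mathcal S)=1$ argument of Theorem~\ref{thm:three} does not.
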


However, dyadic reduction does not resolve these two cases, by noting that for $o(\mathcal{S})=1$, the matrix realisation used in the proof of Theorem~\ref{thm:three} needs not have a unique determinant term of least valuation when no zero is available, and for $o(\mathcal{S})=9$, reduction modulo $2$ gives rise to the all-ones matrix whose characteristic polynomial splits completely over $\F_2$.

\section{Block embedding in higher dimensions}\label{sec:embedding}

Note that if a $3\times 3$ integer matrix with a noninteger eigenvalue occurs as a diagonal block of a block-triangular matrix, then the high-dimensional block-triangular matrix does not have integral spectrum. Thus, embedding a $3\times 3$ block with noninteger eigenvalues gives rise to the second main result.

\begin{theorem}\label{thm:embedding}
  For $n\ge3$, $\mathcal{S}$ is a multiset of $n^2$ integers, satisfying
	\begin{equation}\label{eq:sparse-bound}
		\nu(\mathcal{S})\le n^2-3n+8.
	\end{equation}
	Then, $\mathcal{S}$ is $\PSI_n$ if and only if it has at most one nonzero entry, or it has the form~\eqref{eq:two}. In particular, no $\PSI_n$ multiset has $3\le\nu(\mathcal{S})\le n^2-3n+8$.
\end{theorem}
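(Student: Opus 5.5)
The case $\nu(\mathcal{S})\le2$ is settled by Proposition~\ref{prop:two}, so the work is to show that no $\PSI_n$ multiset has $3\le\nu(\mathcal{S})\le n^2-3n+8$. The plan is to embed a bad $3\times3$ block into a block upper-triangular $n\times n$ matrix. We will choose a $9$-element submultiset $\mathcal{S}'\subseteq\mathcal{S}$ with $3\le\nu(\mathcal{S}')\le 8$, i.e.\ at least three nonzero entries and at least one zero. By Theorem~\ref{thm:three}, $\mathcal{S}'$ is not $\PSI_3$. Hence some $B\in\mathcal M_3(\mathcal{S}')$ has a non-integer eigenvalue.

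We then build $M\in\mathcal M_n(\mathcal{S})$ of the form
\begin{equation*}
M=\begin{pmatrix} B & X\\ 0 & Y\end{pmatrix},
\end{equation*}
where the lower-left $(n-3)\times 3$ block is identically zero. Since $\chi_M=\chi_B\,\chi_Y$, the matrix $M$ has a non-integer eigenvalue, so $\mathcal{S}$ is not $\PSI_n$. The remaining $n^2-9$ entries of $\mathcal{S}\setminus\mathcal{S}'$ are placed arbitrarily in $X$ and $Y$. The only constraint is that the zero block needs $3(n-3)=3n-9$ zeros from $\mathcal{S}\setminus\mathcal{S}'$.

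The main step is the counting. The number of zeros in $\mathcal{S}$ is $z=n^2-\nu(\mathcal{S})\ge 3n-8$. We take $\mathcal{S}'$ to consist of exactly one zero together with eight further elements. These eight should include at least three nonzero entries, which is possible since $\nu\ge3$, and they are filled up with nonzeros first and zeros afterwards. This forces $\nu(\mathcal{S}')\ge3$, and $\nu(\mathcal{S}')\le8$ because $\mathcal{S}'$ contains a zero.

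Then $\mathcal{S}'$ uses at most $1+\max(0,\,8-\nu(\mathcal{S}))$ zeros. If $\nu\ge8$, only one zero is used, leaving $z-1\ge 3n-9$ zeros, which is exactly enough for the zero block; this is where the bound $n^2-3n+8$ is sharp. If $3\le\nu<8$, then $\mathcal{S}'$ contains all nonzeros and $9-\nu$ zeros. The leftover count is $z-(9-\nu)=n^2-9$, and all of these are zeros, which is at least $3n-9$ since $n\ge3$.

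I expect no real obstacle beyond this bookkeeping. The one point to check is that choosing $\mathcal{S}'$ greedily with nonzeros maximises the number of spare zeros, and that Theorem~\ref{thm:three} applies with a single zero. Both are immediate. The case $n=3$ reduces to Theorem~\ref{thm:three} directly, since there $3n-9=0$.
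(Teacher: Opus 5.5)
Your proposal is correct and is essentially the paper's proof. The paper likewise takes $\min\{\nu(\mathcal S),8\}$ nonzero entries and the complementary number of zeros to form a $3\times3$ block with non-integral spectrum via Theorem~\ref{thm:three}, embeds it in a block-triangular $n\times n$ matrix whose $3(n-3)$-entry off-diagonal block is zero, and checks that the required zeros are available exactly when $\nu(\mathcal S)\le n^2-3n+8$; the only cosmetic difference is that the paper puts the zero block in the upper right rather than the lower left.
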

\begin{proof} It follows first from Proposition~\ref{prop:two} that we only need consider $\nu(\mathcal{S})\ge3$. Define $k=\min\{\nu(\mathcal{S}),8\}$. Choose $k$ nonzero entries and $9-k$ zeros to configure a $3\times 3$ matrix $M_1$, in a general form of~(\ref{eq:one-odd-placement}), such that it has the nonintegral spectrum as shown in Theorem~\ref{thm:three}. 

Using further $3(n-3)$ zeros to embed $M_1$ into an $n\times n$ square matrix
	\begin{equation}\label{eq:block}
		M=\begin{pmatrix}M_1& \boldsymbol{0}\\ *&M_2\end{pmatrix}.
	\end{equation}
	This needs $(9-k)+3(n-3)=3n-k$ zeros, while it follows from~(\ref{eq:sparse-bound}) that $\mathcal S$ contains at least $n^2-\nu(\mathcal S)\geq 3n-8$ zeros. If $\nu(\mathcal S)\le8$, then $k=\nu(\mathcal S)$ and $3n-\nu(\mathcal S)\le n^2-\nu(\mathcal S)$ for $n\geq 3$. If $\nu(\mathcal S)\ge8$, then $k=8$ and the requirement becomes $3n-8\leq n^2-\nu(\mathcal S)$.  All other elements are arranged in the blocks marked $*$ and $M_2$. It indicates that it is always possible to construct such an $M$ from $\mathcal{S}$.
	
	However, the characteristic polynomial of $M$ admits the decomposition of
	\begin{equation}
		\chi_{M}(\lambda)=\chi_{M_1}(\lambda)\chi_{M_2}(\lambda).
	\end{equation}
    If $\chi_M$ splits completely over $\Z$, then so would $\chi_{M_1}$. This contradicts the choice of $M_1$, so $\mathcal S$ cannot be $\PSI$ in dimension $n$. 
\end{proof}

The threshold $3n-8$ in the proof is the number of zeros used by this particular embedding: one inside a $3\times 3$ matrix and $3(n-3)$ beside it. Whether this threshold is optimal is unknown. Furthermore, there is

\begin{corollary}\label{cor:dense-range}
	If a $\PSI$ multiset of $n^2$ integers with $n\geq 3$ has at least three nonzero entries, then it has at most $3n-9$ zero entries.
\end{corollary}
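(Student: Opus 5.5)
This follows from Theorem~\ref{thm:embedding} by contraposition, so the plan is essentially bookkeeping. Let $\mathcal{S}$ be a $\PSI_n$ multiset with $n\ge3$ and $\nu(\mathcal{S})\ge3$, and write $z=n^2-\nu(\mathcal{S})$ for the number of zero entries. Theorem~\ref{thm:embedding} says that a multiset with $3\le\nu(\mathcal{S})\le n^2-3n+8$ is never $\PSI_n$. Since $\mathcal{S}$ is $\PSI_n$ with $\nu(\mathcal{S})\ge 3$, it must have $\nu(\mathcal{S})\ge n^2-3n+9$. Equivalently, $z=n^2-\nu(\mathcal{S})\le 3n-9$, which is the claim.

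The only point that needs checking is whether the hypotheses of Theorem~\ref{thm:embedding} really cover every value from $3$ up to $n^2-3n+8$, with no gap at the lower end. For $n=3$ the upper bound is $n^2-3n+8=8$, so this case is exactly Theorem~\ref{thm:three}, and it yields $z\le 0$: the multiset is fully nonzero. For $n\ge4$ we have $n^2-3n+8\ge 12\ge 3$, so the excluded range is nonempty and starts at $3$. In the proof of Theorem~\ref{thm:embedding}, both cases $\nu\le 8$ and $\nu\ge8$ were handled via $k=\min\{\nu,8\}$, so the range is covered without gaps.

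I do not expect any real obstacle. The one thing to do carefully is the integer arithmetic: the strict inequality $\nu>n^2-3n+8$ becomes $\nu\ge n^2-3n+9$, and from there $z\le 3n-9$.
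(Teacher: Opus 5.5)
Your proposal is correct and is exactly the paper's argument: the corollary is the contrapositive of the ``in particular'' clause of Theorem~\ref{thm:embedding}, so $\nu(\mathcal S)\ge 3$ forces $\nu(\mathcal S)\ge n^2-3n+9$, i.e.\ at most $3n-9$ zeros. Your extra check for gaps at the lower end is harmless but unnecessary, because the theorem's statement already excludes the whole range $3\le\nu(\mathcal S)\le n^2-3n+8$.
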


\section{Nontrivial $\PSI$ multisets in higher dimensions}\label{sec:defects}

It follow above that the multiset density plays a critical role in determining  whether an integer multiset is $\PSI$ or not, and especially, it is generically not $\PSI$ for the sparse case where the portion of nonzero elements in a multiset is less than $1-3/n$. In this section, a second useful parameter is introduced as the number of elements that differ from a common one labelled as $r$ to construct nontrivial $\PSI$ in higher dimensions. 

Denote by $\boldsymbol{e}_i$ the $i$-th orthonormal basis vector and $\mathbf1=\sum_i\boldsymbol{e}_i$ the all-ones column vector. Let $J_n=\one\one^{\mathsf T}$ and $E_{ij}=\boldsymbol{e}_i\boldsymbol{e}_j^{\mathsf T}$ for $i, j=1,\cdots, n$, with the transpose operation $\mathsf T$. Specifically, consider a family of $n\times n$ matrices in the form of
\begin{equation}\label{eq:UV}
	M=rJ_n+\sum_{\alpha=1}^k\delta_\alpha E_{i_\alpha j_\alpha}=UV^{\mathsf T},
\end{equation}
where
\begin{equation*}
	U=[\one,\boldsymbol{e}_{i_1},\ldots,\boldsymbol{e}_{i_k}],~~~V=[r\one,\delta_1\boldsymbol{e}_{j_1},\ldots,\delta_k\boldsymbol{e}_{j_k}].
\end{equation*}
Here $\delta_\alpha$ denotes the nonzero deviation between the matrix entry at position $(i_\alpha,j_\alpha)$ and $r$. Thus, $M$ has rank at most $k+1$. 

Defining further $\mathbf 1_{\{P\}}=1$ when $P$ holds and $0$ otherwise, we are able to obtain the $(k+1)\times (k+1)$ matrix
 \begin{equation}\label{VU}
 R\equiv V^{\mathsf T}U=\begin{pmatrix}nr&r&r&\cdots&r\\ \delta_1&\delta_1\mathbf 1_{\{j_1=i_1\}}&\delta_1\mathbf 1_{\{j_1=i_2\}}&\cdots&\delta_1\mathbf 1_{\{j_1=i_k\}}\\ \delta_2&\delta_2\mathbf 1_{\{j_2=i_1\}}&\delta_2\mathbf 1_{\{j_2=i_2\}}&\cdots&\delta_2\mathbf 1_{\{j_2=i_k\}}\\ \vdots&\vdots&\vdots&\ddots&\vdots\\ \delta_k&\delta_k\mathbf 1_{\{j_k=i_1\}}&\delta_k\mathbf 1_{\{j_k=i_2\}}&\cdots&\delta_k\mathbf 1_{\{j_k=i_k\}}\end{pmatrix},
 	 \end{equation}
which has entries
\begin{equation}\label{eq:R}
	R_{00}=nr,\quad R_{0\beta}=r,\quad R_{\alpha0}=\delta_\alpha, \quad R_{\alpha\beta}=\delta_\alpha\one_{\{j_\alpha=i_\beta\}}	\quad \alpha,\beta=1,\cdots, k.
\end{equation}
Then, we have

\begin{proposition}\label{prop:reduction}
	If $k<n$, then
	\begin{equation}\label{eq:reduction}
		\chi_M(\lambda)=\lambda^{n-k-1}\det(\lambda I_{k+1}-V^{\mathsf T}U).
	\end{equation}
	For a fixed $k$, the number of positional types is bounded independently of $n$.
\end{proposition}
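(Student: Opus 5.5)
The plan is to use the standard identity that, for an $n\times m$ matrix $U$ and an $n\times m$ matrix $V$, the products $UV^{\mathsf T}$ ($n\times n$) and $V^{\mathsf T}U$ ($m\times m$) share their nonzero spectrum with multiplicities:
\begin{equation*}
\lambda^{m}\det(\lambda I_n-UV^{\mathsf T})=\lambda^{n}\det(\lambda I_m-V^{\mathsf T}U).
\end{equation*}
This is the Sylvester (Weinstein--Aronszajn) determinant identity. I would prove it by computing the determinant of the block matrix
\begin{equation*}
\begin{pmatrix}\lambda I_n & U\\ V^{\mathsf T} & I_m\end{pmatrix}
\end{equation*}
in two ways, via the Schur complement with respect to each diagonal block. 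Alternatively, one can multiply by the block-triangular matrices $\begin{pmatrix}I_n&-U\\0&I_m\end{pmatrix}$ and $\begin{pmatrix}I_n&0\\-V^{\mathsf T}&I_m\end{pmatrix}$ on suitable sides. Either way the result is a polynomial identity in $\lambda$, valid for $\lambda\neq0$ and hence identically.

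Take $m=k+1$ and $M=UV^{\mathsf T}$ from~\eqref{eq:UV}. The identity then reads
\begin{equation*}
\lambda^{k+1}\chi_M(\lambda)=\lambda^{n}\det(\lambda I_{k+1}-V^{\mathsf T}U).
\end{equation*}
Since $k<n$ gives $n-k-1\ge0$, we can cancel $\lambda^{k+1}$ in the polynomial ring $\Z[\lambda]$, which is an integral domain. This yields exactly~\eqref{eq:reduction}. Next I would verify entrywise that $V^{\mathsf T}U$ equals the matrix $R$ in~\eqref{eq:R}, using the relations
\begin{equation*}
(r\one)^{\mathsf T}\one=nr,\quad (r\one)^{\mathsf T}\boldsymbol e_{i_\beta}=r,\quad (\delta_\alpha\boldsymbol e_{j_\alpha})^{\mathsf T}\one=\delta_\alpha,\quad (\delta_\alpha\boldsymbol e_{j_\alpha})^{\mathsf T}\boldsymbol e_{i_\beta}=\delta_\alpha\one_{\{j_\alpha=i_\beta\}}.
\end{equation*}

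For the second claim, the observation is that $R$ depends on the positions $(i_\alpha,j_\alpha)$ only through the incidence pattern $\one_{\{j_\alpha=i_\beta\}}$, a $k\times k$ zero-one matrix. There are at most $2^{k^2}$ such patterns, whatever the value of $n$. The variables $r$, $n$ and $\delta_\alpha$ enter only as parameters.

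A finer notion of positional type would be the equivalence class of the $2k$-tuple of indices under relabeling of $\{1,\dots,n\}$, that is, under simultaneous permutation similarity. This is also bounded independently of $n$: it is determined by the partition of the $2k$ index slots into equal-value classes, so the number of types is at most the Bell number $B_{2k}$. I would state whichever bound is cleaner, probably the incidence-pattern one, since it directly determines $\chi_M$.

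The only subtle point is the cancellation step. It requires $n\ge k+1$ so that the exponent $n-k-1$ is nonnegative, and this is exactly the hypothesis $k<n$. Otherwise the argument is routine.
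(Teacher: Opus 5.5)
Your proposal is correct and follows the paper's route. The first claim comes from Sylvester's determinant identity applied to $M=UV^{\mathsf T}$, with $\lambda^{k+1}$ cancelled using $k<n$. The second comes from the fact that only the coincidence pattern of the at most $2k$ indices $i_\alpha,j_\alpha$ up to relabeling matters (this is the paper's directed-graph type), with $n$ entering $R$ only through $R_{00}=nr$. You supply more detail than the paper: a Schur-complement proof of the identity, the entrywise check $V^{\mathsf T}U=R$, and explicit bounds $2^{k^2}$ and $B_{2k}$ on the number of types.
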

\begin{proof}
	Following first from the Sylvester's identity~\cite{HornJohnson2013} and~(\ref{VU}) gives rise to the characteristic polynomial~(\ref{eq:reduction}). Then, associate a directed edge $i_\alpha\to j_\alpha$ with each deviation $\delta_\alpha$ in position $(i_\alpha,j_\alpha )$, also allowing for loops, i.e., $i_\alpha=j_\alpha$. There are at most $2k$ incident vertices, and simultaneous row and column permutations relabel these vertices, so there are only finite graph types for a fixed $k$. For each type, $n$ enters~\eqref{eq:R} only through $R_{00}=nr$. Thus, the number of types is bounded independently of $n$.
\end{proof}

Next, we consider the low-rank case of $k\leq 2$ and show that there exist nontrivial $\PSI$ multisets in infinitely many dimensions. Note that $k=0$ recovers the constant $\PSI_n$ multiset where all elements are equal.

\subsection{$k=1$}

If $r=0$ in $M$ as per~(\ref{eq:UV}), then $k=1$ means one single nonzero entry, which has been covered by Proposition~\ref{prop:two}. For $r\ne0$, there are two positional types of a single deviation that it lies on or off the diagonal of a square matrix.

\begin{theorem}\label{thm:one-defect} For $r\neq 0$ and $n\geq 2$, the integer multiset $\{r^{[n^2-1]},s\}$ is $\PSI$ if and only if both
	\begin{equation}\label{eq:one-defect}
		(t-(n-1))^2+4(n-1),\qquad 4t+n^2-4
	\end{equation}
	are squares in $\Q$, with $t=s/r \in \Q$.
\end{theorem}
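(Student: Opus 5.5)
The plan is to specialise the rank reduction of Proposition~\ref{prop:reduction} to a single deviation, $k=1$, which is allowed since $k=1<n$ for every $n\ge2$. Every matrix in $\mathcal M_n(\{r^{[n^2-1]},s\})$ has the form $M=rJ_n+\delta E_{ij}$ with $\delta=s-r$. If $s=r$, then $t=1$, both quantities in~\eqref{eq:one-defect} equal $n^2$, and the multiset is constant and trivially $\PSI$. So I will assume $\delta\ne0$. By~\eqref{eq:reduction}, $\chi_M(\lambda)=\lambda^{n-2}\det(\lambda I_2-R)$, where $R$ is the $2\times2$ matrix~\eqref{eq:R}. Hence $M$ has integral spectrum if and only if the monic integer quadratic $\det(\lambda I_2-R)$ has integer roots. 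By the rational root theorem, this holds exactly when its discriminant is a square in $\Z$.

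There are only two positional types: $i=j$ (deviation on the diagonal) and $i\ne j$ (deviation off the diagonal). Both types occur for every $n\ge2$. So $\mathcal S$ is $\PSI_n$ if and only if both resulting quadratics have square discriminants.

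\emph{Diagonal type.} Here $R=\begin{pmatrix}nr&r\\ \delta&\delta\end{pmatrix}$, so
\[
\det(\lambda I_2-R)=\lambda^2-(nr+\delta)\lambda+(n-1)r\delta .
\]
The discriminant is $(nr+\delta)^2-4(n-1)r\delta$. Substituting $\delta=r(t-1)$ gives
\[
r^2\bigl[(n-1+t)^2-4(n-1)(t-1)\bigr]=r^2\bigl[(t-(n-1))^2+4(n-1)\bigr],
\]
after expanding and collecting terms.

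\emph{Off-diagonal type.} Here $R=\begin{pmatrix}nr&r\\ \delta&0\end{pmatrix}$, so
\[
\det(\lambda I_2-R)=\lambda^2-nr\lambda-r\delta .
\]
The discriminant is $n^2r^2+4r\delta=r^2(4t+n^2-4)$.

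The final step converts "square in $\Z$" into "square in $\Q$". Each discriminant is an integer of the form $r^2X$ with $X\in\Q$ and $r\ne0$. If $r^2X=m^2$ with $m\in\Z$, then $X=(m/r)^2$ is a square in $\Q$. Conversely, if $X=q^2$ with $q\in\Q$, then $r^2X=(rq)^2$ is a rational square that is an integer. Since an integer that is a square of a rational number is a square of an integer, it is a square in $\Z$. This yields exactly the two conditions in~\eqref{eq:one-defect}.

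I expect no real obstacle; the main care points are the algebraic simplification of the diagonal discriminant and confirming that Sylvester's identity applies, both for $n=2$ and uniformly across the two positional types.
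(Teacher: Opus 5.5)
Your proposal is correct and follows essentially the same route as the paper: Sylvester's identity reduces to the $2\times 2$ matrix $R$, the two positional types (diagonal and off-diagonal) give the same quadratics, with discriminants $r^2\bigl[(t-(n-1))^2+4(n-1)\bigr]$ and $r^2(4t+n^2-4)$, and the rational root theorem finishes. Your separate treatment of $s=r$ and your explicit conversion between squares in $\Z$ and squares in $\Q$ make explicit steps the paper leaves implicit.
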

\begin{proof}
 By using Proposition~\ref{prop:reduction}, we obtain that one deviation on the diagonal of matrix $M$ leads to the characteristic polynomial
 \begin{equation}
 	\chi_{\rm d}(\lambda)=\lambda^{n-2}[\lambda^2-(s+(n-1)r)\lambda+(n-1)(s-r)r],\label{eq:one-diag}
 \end{equation}
and an off-diagonal deviation gives
\begin{equation}
	\chi_{\rm o}(\lambda)=\lambda^{n-2}[\lambda^2-nr\lambda+(r-s)r].\label{eq:one-off}
\end{equation}

Computing the discriminant of the quadratic term in the above polynomial and then dividing it by $r^2$ yields 
\begin{equation*}
	\Delta_{\rm d}/r^2=(t-(n-1))^2+4(n-1)
\end{equation*}
for the diagonal deviation, and
\begin{equation*}
	\Delta_{\rm o}/r^2=4t+n^2-4
\end{equation*}
for the off-diagonal deviation. Both quadratic terms split completely over $\Q$ if and only if these two quantities are squares in $\Q$. Furthermore, as the characteristic polynomials are monic with integer coefficients, every rational root is an integer and thus $M$ has integral spectrum. This proves the claim.
\end{proof}

\begin{corollary}\label{cor:examples}
	Let $q\ge5$ be an odd integer and $n=(q^2-5)/4$. For every nonzero integer $r$, the multiset
	\begin{equation}\label{eq:examples}
		\{r^{[n^2-1]},(n+2)r\}
	\end{equation}
	is $\PSI$ in dimension $n$.
\end{corollary}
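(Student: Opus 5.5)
The plan is to apply Theorem~\ref{thm:one-defect} with $t=s/r=n+2$ and check that both quantities in~\eqref{eq:one-defect} are rational squares. First, $n$ must be a legitimate dimension. Since $q$ is odd, $q^2\equiv 1\pmod 8$, so $q^2-5\equiv 4\pmod 8$. Hence $n=(q^2-5)/4$ is an integer; it is in fact odd. For $q\ge5$ it satisfies $n\ge5\ge2$. Also $r\neq0$, so the hypotheses of Theorem~\ref{thm:one-defect} hold. The exceptional entry $s=(n+2)r$ differs from $r$ because $n+2\neq1$, although this is not even needed for the theorem.

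Next comes the off-diagonal quantity. Substituting $t=n+2$ gives
\[
4t+n^2-4=n^2+4n+4=(n+2)^2,
\]
which is a square for every $n$. This explains the choice $t=n+2$.

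Then the diagonal quantity. We have $t-(n-1)=3$, so
\[
(t-(n-1))^2+4(n-1)=9+4n-4=4n+5.
\]
By the definition of $n$, $4n+5=q^2$, which is a square in $\Z\subset\Q$. Both conditions of Theorem~\ref{thm:one-defect} therefore hold, and the multiset $\{r^{[n^2-1]},(n+2)r\}$ is $\PSI_n$.

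There is essentially no obstacle; the only point needing care is the congruence showing $n$ is a positive integer at least $2$. As an optional sanity check, one can write down the spectra explicitly. For a diagonal deviation the quadratic in~\eqref{eq:one-diag} has roots $r\bigl((2n+1)\pm q\bigr)/2$, which are integers since $q$ is odd. For an off-diagonal deviation the roots are $r\bigl(n\pm(n+2)\bigr)/2$, that is, $(n+1)r$ and $-r$.
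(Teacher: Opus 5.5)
Your proof is correct and follows the paper's argument: you substitute $t=n+2$ into Theorem~\ref{thm:one-defect}, get $4n+5=q^2$ and $(n+2)^2$, and check the same explicit eigenvalues. Your extra verification that $n=(q^2-5)/4$ is an integer with $n\ge5$ (via $q^2\equiv1\pmod 8$) is a detail the paper leaves implicit.
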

\begin{proof}
	Choose $s=(n+2)r$ in \eqref{eq:one-defect} and thus $t=n+2$. Then, $(t-(n-1))^2+4(n-1)=4n+5=q^2$ and $4t+n^2-4=(n+2)^2$, both being squares in $\Z$, so it follows from Theorem~\ref{thm:one-defect} that the multiset~(\ref{eq:examples}) is $\PSI$. Moreover, the two nonzero eigenvalues are $r(2n+1\pm q)/2$ for a diagonal deviation and $r(n+1), -r$ for an off-diagonal deviation.

\end{proof}

The smallest example in the above multiset is $\{1^{[24]},7\}$ for $n=5$, of which its nonzero eigenvalues are $8,3$ or $6,-1$, depending on the position of $7$. This example further implies that the unique
multiset proposed in Conjecture~\ref{conj:dense-rigidity} does not extend to higher dimensions.

\subsection{$k=2$}

We first consider that the two nonzero deviations are equal, i.e., $\delta_1=\delta_2=\delta$, and then explore the nonequal case.
\begin{theorem}\label{thm:equal}
For integer $r,s$, and $n\geq 3$, the multiset
	\begin{equation}\label{eq:equal}
		\{r^{[n^2-2]},s^{[2]}\}\text{ is }\PSI_n	\quad\Longleftrightarrow\quad r=0\ \text{or}\ s=r.
	\end{equation}
\end{theorem}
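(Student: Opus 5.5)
For the easy direction I take the two cases separately. If $r=0$, the multiset is $\{s^{[2]},0^{[n^2-2]}\}$. The two nonzero entries then have product $s^2$, which is a square in $\Z$, so Proposition~\ref{prop:two} shows it is $\PSI_n$. If $s=r$, every matrix equals $rJ_n$, which has rank one and eigenvalues $nr,0,\dots,0$.

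For the converse, assume $r\neq0$ and $\delta=s-r\neq0$, and put $t=\delta/r\in\Q^\times$. I will apply Proposition~\ref{prop:reduction} with $k=2$ and $\delta_1=\delta_2=\delta$. For each positional type this gives a $3\times3$ matrix $R$ from~\eqref{eq:R}. The simplification is that equal deviations make many types symmetric under swapping the two deviations. In each such type the vector $\boldsymbol e_1-\boldsymbol e_2$ (indices $1,2$ of $R$) is an eigenvector, and the remaining $2\times2$ block acts on $\mathrm{span}\{\boldsymbol e_0,\boldsymbol e_1+\boldsymbol e_2\}$. I expect the following types to be the useful ones.
\begin{itemize}[leftmargin=*]
\item Deviations at $(1,1),(2,2)$, at $(1,2),(2,1)$, or at $(1,1),(2,1)$. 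The block is $\left(\begin{smallmatrix}nr&2r\\ \delta&\delta\end{smallmatrix}\right)$, so $(n-t)^2+8t$ must be a square in $\Q$.
\item Deviations at $(1,2),(1,3)$ (same row, off-diagonal). The block is $\left(\begin{smallmatrix}nr&2r\\ \delta&0\end{smallmatrix}\right)$, so $n^2+8t$ must be a square in $\Q$.
\end{itemize}
Further types break the symmetry, for example the path $(1,2),(2,3)$, the pair $(1,1),(1,2)$, and (for $n\ge4$) the disjoint pair $(1,2),(3,4)$. These give genuine cubics $\det(\lambda I_3-R)$ in $\lambda$, all of which must split over $\Q$. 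I will compute these cubics explicitly in terms of $n$ and $t$.

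The main obstacle is the Diophantine step: showing that no $t\in\Q^\times$ meets all these conditions simultaneously, uniformly in $n\ge3$. Two quadratic conditions alone define a genus-one-type intersection, which may have rational points. So I would first intersect them and parametrize: write $n^2+8t=m^2$, so $t=(m^2-n^2)/8$. Substituting this into $(n-t)^2+8t=\square$ gives a quartic in $m$ that must be a square. My first attempt would be to use a splitting cubic from a path type to pin $t$ down further. Its splitting means there is an integer root $\lambda$ dividing the constant term, which is a small multiple of $r\delta^2$ or $r^2\delta$. Evaluating at the known rational roots should force $t=0$.

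If that does not close the argument, the fallback is the $2$-adic method of Lemma~\ref{lem:cubic}. By Lemma~\ref{lem:scale} I can make the pair $(r,s)$ coprime. I would then choose an arrangement whose $T$, $C$, $D$ have valuations forcing non-splitting, and split into cases according to the parities of $r$ and $s$ (only $o(\mathcal S)\in\{2,n^2-2,n^2\}$ can occur).
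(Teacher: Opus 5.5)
Your easy direction is fine and agrees with the paper. The converse, however, is not proved: you defer everything to a ``Diophantine step'' that you never carry out.

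As you note yourself, requiring $n^2+8t$ and $(n-t)^2+8t$ to be squares in $\Q$ leads to a genus-one problem, so these two conditions cannot be expected to close the argument. The sentence ``evaluating at the known rational roots should force $t=0$'' is not a mechanism. The rational root theorem gives a finite candidate list for each fixed $(n,r,\delta)$, and excluding all candidates uniformly is exactly what has to be shown.

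The $2$-adic fallback handles the mixed-parity cases. With the path cubic, $r$ odd and $s$ even gives a cubic that does not split mod $2$, and $r$ even and $s$ odd satisfies the hypotheses of Lemma~\ref{lem:cubic}. But it has no evident handle on the case where $r$ and $s$ are both odd. Then every matrix reduces mod $2$ to $J_n$, whose characteristic polynomial $\lambda^{n-1}(\lambda-n)$ splits over $\F_2$. For the path arrangement, $T=nr$, $C=-2r\delta$ and $D=r\delta^2$, so for odd $n$ one has $e_2(T)=0<e_2(D)/3$, and Lemma~\ref{lem:cubic} does not apply.

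A minor slip: for the type $(1,1),(2,1)$, the vector $\boldsymbol e_1-\boldsymbol e_2$ is a left null vector of $R$, not an eigenvector. The quadratic factor you state is still correct.

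The missing idea is that no arithmetic is needed. You already name the right arrangement, the path $(1,2),(2,3)$, which gives $\chi_M(\lambda)=\lambda^{n-3}\bigl[\lambda^3-nr\lambda^2-2r\delta\lambda-r\delta^2\bigr]$. The discriminant of the cubic factor is $-r^2\delta^2\bigl[27\delta^2+(36n-32)r\delta+4n^2(n-1)r^2\bigr]$. The bracket, as a quadratic in $\delta$, has positive leading coefficient and discriminant $-16(3n-4)^3r^2<0$, so it is strictly positive whenever $r\neq0$. Hence for $r\delta\neq0$ the cubic has negative discriminant, i.e.\ two non-real roots.

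So this single matrix already fails to have integral spectrum, uniformly in $n\ge3$ and for all parities. This real (archimedean) obstruction is exactly the paper's proof. It makes both your symmetric-type quadratic conditions and the parity case analysis unnecessary.
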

\begin{proof}
	The given multiset has at most two distinct elements, and thus gives rise to the matrix form~(\ref{eq:UV}) with two equal deviations $\delta_1=\delta_2=\delta=s-r$. Place these two deviations at positions $(1,2)$ and $(2,3)$, so $V^{\mathsf{T}}U$ becomes 
		\begin{equation*}
		\begin{pmatrix}nr&r&r\\\delta&0&\delta\\\delta&0&0\end{pmatrix},
	\end{equation*}
	and the characteristic polynomial is
	\begin{equation}\label{eq:equal-path}
		\chi_M(\lambda)=\lambda^{n-3}[\lambda^3-nr\lambda^2-2r\delta \lambda-r\delta^2].
	\end{equation}

	The discriminant of the cubic term in the above polynomial is
	\begin{equation}\label{eq:equal-disc}
		-r^2\delta^2
		\bigl[27\delta^2+(36n-32)r\delta+4n^2(n-1)r^2\bigr].
	\end{equation}
The term in the bracket is quadratic in $\delta$ and has discriminant $-16(3n-4)^3r^2$. Consequently, it is strictly positive, and \eqref{eq:equal-disc} is negative when $r\delta\ne0$. It follows further that the cubic term has one real root and a nonreal conjugate pair of roots, and hence the multiset is $\PSI_n$ only if $r= 0$ or $\delta=0$.
	
 Finally, $r=0$	gives at most two nonzero entries with square product $s^2$, while $\delta=0$, or equivalently, $s=r$, gives a constant matrix. Both satisfy the permutation condition by Proposition~\ref{prop:two} and thus lead to the conclusion that the given multiset is $\PSI_n$.
\end{proof}

It shows that the equal case only allows for constant $\PSI_n$ multisets, and it is also remarked that the proof uses no arithmetic hypothesis on the nonzero deviation and complements the dyadic argument used in Section~\ref{sec:three}.

We then consider the integer multiset $\{r^{[n^2-2]}, s, t\}$ where $n\ge3$, $s=r+\delta_1$, and $t=r+\delta_2$, with $\delta_1\neq \delta_2$. Again, place two distinct deviations in the matrix at positions $(1,2)$ and $(2,3)$, and the characteristic polynomial is $\lambda^{n-3}\bigl[\lambda^3-nr\lambda^2-r(\delta_1+\delta_2)\lambda-r\delta_1\delta_2\bigr].$ Similarly, other positions of the two deviations give factors of degree at most three in the polynomial. Therefore, we only need to study the cubic term in the polynomial.

For simplicity, we now focus on the dimension three. Given three distinct integers $r,s,t$, with $r\ne0$,  define 
\begin{equation}\label{eq:normalized-defects}
	x=(s-r)/r,\qquad y=(t-r)/r,\qquad z=\lambda/r.
\end{equation}
Thus, $x,y\in\Q\setminus\{0\}$ and $x\ne y$, and  we obtain

\begin{proposition}\label{prop:eight}
	The multiset $\{r^{[7]},s,t\}$ is $\PSI$ in dimension three if and only if the following eight polynomials split completely over $\Q$:
	\begin{equation}\label{eq:eight}
		\begin{aligned}
			p_{1x}&=z[z^2-(3+x)z+2x-y],\\
			p_{1y}&=z[z^2-(3+y)z+2y-x],\\
			p_2&=z[z^2-3z-x-y],\\
			p_3&=z^3-(3+x+y)z^2+(xy+2x+2y)z-xy,\\
			p_{4x}&=z^3-(3+x)z^2+(2x-y)z+xy,\\
			p_{4y}&=z^3-(3+y)z^2+(2y-x)z+xy,\\
			p_5&=z^3-3z^2-(xy+x+y)z+xy,\\
			p_6&=z^3-3z^2-(x+y)z-xy.
		\end{aligned}
	\end{equation}
	The $72$ ordered placements of the two deviations form $12$ orbits under permutation similarity and give exactly these eight polynomial types.
\end{proposition}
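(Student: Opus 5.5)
The plan is to apply Proposition~\ref{prop:reduction} with $n=3$ and $k=2$. Since $k<n$ holds, every realisation $M$ of $\{r^{[7]},s,t\}$ can be written as $M=rJ_3+\delta_1E_{i_1j_1}+\delta_2E_{i_2j_2}$ with $(i_1,j_1)\ne(i_2,j_2)$. Its characteristic polynomial is then $\chi_M(\lambda)=\det(\lambda I_3-R)$, with $R$ the $3\times 3$ matrix in~\eqref{eq:R}. Substituting $\lambda=rz$, $\delta_1=rx$ and $\delta_2=ry$, and dividing by $r^3$, turns $\chi_M$ into a monic cubic in $z$ with coefficients in $\Z[x,y]$. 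Since $r\ne0$, $\chi_M$ splits over $\Q$ exactly when this cubic splits over $\Q$. The rational root theorem, recalled in Section~\ref{sec:prelim}, then makes splitting over $\Q$ equivalent to integral spectrum. So the proposition reduces to two tasks: enumerate all placements up to permutation similarity, and compute one cubic per orbit.

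For the enumeration, there are $9\cdot 8=72$ ordered placements of the pair of distinct positions, with $\delta_1$ at the first and $\delta_2$ at the second. Simultaneous permutations of rows and columns preserve $J_3$ and the spectrum, so I will classify by the directed graph on $\{1,2,3\}$ with a loop or edge $i_\alpha\to j_\alpha$ labelled $\alpha$. The orbit types are:
\begin{itemize}
\item both loops ($\delta_1,\delta_2$ on the diagonal);
\item a loop labelled $1$ plus an edge labelled $2$: edge leaving the loop vertex, edge entering it, or edge disjoint from it;
\item the same three shapes with labels swapped;
\item two edges: a $2$-cycle, a path $i\to j\to k$ in either labelling, a common tail (out-star), or a common head (in-star).
\end{itemize}
This gives $1+3+3+1+2+1+1=12$ orbits. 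I will check the count by orbit sizes: $6,\,6\cdot 3$ for each loop-plus-edge labelling (sizes $6,6,6$), $6$ for the $2$-cycle, $6$ for each path labelling, and $6$ each for the two stars. The total is $6+18+18+6+12+6+6=72$. I will recount the stabilisers carefully here.

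For each orbit I will write down $R$ from~\eqref{eq:R} with $n=3$, namely $R_{00}=3r$, $R_{0\beta}=r$, $R_{\alpha0}=\delta_\alpha$ and $R_{\alpha\beta}=\delta_\alpha\mathbf 1_{\{j_\alpha=i_\beta\}}$, and expand the determinant. Only the $1\times3$ incidence pattern matters, so these are routine $3\times3$ computations. For example, the path $(1,2),(2,3)$ gives $z^3-3z^2-(x+y)z-xy=p_6$, in agreement with the formula preceding the proposition. The out-star and in-star should produce the factor $z$ and give $p_2$. The $2$-cycle should give $p_5$, and two diagonal deviations $p_3$. The loop-plus-edge configurations should give $p_{1x},p_{1y},p_{4x},p_{4y}$, together with repeats of earlier types. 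The main obstacle is this bookkeeping: some orbits share a polynomial, for instance both stars giving $p_2$, the path in both labellings giving $p_6$ by symmetry in $x,y$, and some loop-plus-edge shapes coinciding. I must verify that the $12$ orbits collapse to exactly the eight listed polynomials, that none is missed, and that the $x\leftrightarrow y$ swap pairs them as indicated by the subscripts.

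Once the table is complete, both directions are immediate. \emph{Necessity}: every listed polynomial is realised by some placement. \emph{Sufficiency}: every placement lies in one of the $12$ orbits, hence has one of the eight characteristic polynomials, up to the scaling $\lambda=rz$. Combined with the rational root theorem, this yields the equivalence.
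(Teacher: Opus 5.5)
Your proposal is correct and follows essentially the same route as the paper: sort the $72$ ordered placements into $12$ orbits of size six under simultaneous row and column permutations, compute one cubic per orbit, and use the rational root theorem for both directions. The only cosmetic difference is that you expand $\det(zI_3-R)$ with $R=V^{\mathsf T}U$ rather than $\det(zI_3-\overline M)$ directly, which is the same size when $n=3$; your predicted assignments all check out (both loops give $p_3$, a loop with an incident edge gives $p_{1x}$ or $p_{1y}$, a loop with a disjoint edge gives $p_{4x}$ or $p_{4y}$, the $2$-cycle gives $p_5$, both path labellings give $p_6$, and both stars give $p_2$).
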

\begin{proof}
	For the multiset $\{r^{[7]},s,t\}$, dividing  its matrix realisation~(\ref{eq:UV}) by $r$ yields a normalised matrix  $\overline{M}=J_3+xE_{i_1j_1}+yE_{i_2j_2}$. Representatives of the position pair $(i_1,j_1)$ and $(i_2, j_2)$ are listed in the table below, where the first position carries the normalised deviation $x$ and the second carries $y$.
	
	\begin{center} \renewcommand{\arraystretch}{1.12} 
		\begin{tabular}{c p{0.62\textwidth}} 
			\hline 
			Polynomial type & Ordered pairs of deviation positions \\ 
			\hline $p_{1x}$ & $((1,1),(1,2))$; $((1,1),(2,1))$ \\ 
			$p_{1y}$ & $((1,2),(1,1))$; $((1,2),(2,2))$ \\ 
			$p_2$ & $((1,2),(1,3))$; $((1,2),(3,2))$ \\ 
			$p_3$ & $((1,1),(2,2))$ \\ 
			$p_{4x}$ & $((1,1),(2,3))$ \\ 
			$p_{4y}$ & $((1,2),(3,3))$ \\ 
			$p_5$ & $((1,2),(2,1))$ \\ 
			$p_6$ & $((1,2),(2,3))$; $((1,2),(3,1))$ \\ 
			\hline
	 \end{tabular} 
 \end{center}

   These representatives are inequivalent under vertex relabelling, each of which has a trivial stabilizer under six vertex permutations. As a consequence, each of the $12$ positions has size six and thus cover all $9\times 8=72$ ordered pairs of deviation positions. Then, computing the characteristic polynomial $\det(zI_3-\overline{M})$ for each order pair yields the eight polynomials in~(\ref{eq:eight}).
   
    The $\PSI$ property requires that all eight polynomials in~(\ref{eq:eight}) completely split over $\Q$. Conversely, if all eight split completely over $\Q$,  then every normalized eigenvalue $z$ is rational, and thus $\lambda=rz$ is a rational root of a monic integer characteristic polynomial and is an integer.
\end{proof}

\section{Conclusion and remaining problems}\label{sec:summary}

We have studied whether an integer matrix and all its entry permutations can simultaneously have integral spectrum. By using dyadic reduction and Vieta relations, we obtain in Theorem~\ref{thm:three} that the multiset classification with at one zero entry in dimension three is complete, and then extend it to Theorem~\ref{thm:embedding} for higher dimensions, with a linear threshold on the number of zeros. We also introduce a low-rank reduction to construct infinitely many nonconstant $\PSI$ multisets.  

Many problems remain open. For example, it is left open in Conjecture~\ref{conj:dense-rigidity} whether every $\PSI$ multiset with fully nonzero entries in dimension three is always constant. It is unknown in Proposition~\ref{prop:eight} which rational pairs $(x,y)$ make all eight reduced polynomials for two distinct defects split over $\Q$. More generally, whether the  $p$-adic reduction beyond $p=2$ provides more insight into these problems.



\section*{Acknowledgements} The authors thank Michael J. W. Hall for introducing this interesting problem and for the Pythagorean constructions that motivated the present investigation.

\section*{Declaration of generative AI and AI-assisted technologies in the manuscript preparation process} During the preparation of this work, the authors used OpenAI ChatGPT-6 Astra to assist with language editing, literature checking, and the review of mathematical arguments. After using this service, the authors reviewed and edited the content as needed and take full responsibility for the content of the published article.

\bibliographystyle{elsarticle-num} 
\bibliography{references}

\end{document}